\newcommand\esymbol[1]{\begin{circuitikz}
\draw (0,0) to [#1] (0.6,0); \end{circuitikz}}
\newcommand{\diag}{\mathop{\bf diag}}
\newcommand{\argmin}{\mathop{\rm argmin}}
\newcommand{\argmax}{\mathop{\rm argmax}}
\newcommand{\norm}[1]{\left\lVert#1\right\rVert}
\newcommand{\mnorm}[1]{{\left\vert\kern-0.25ex\left\vert\kern-0.25ex\left\vert #1 
    \right\vert\kern-0.25ex\right\vert\kern-0.25ex\right\vert}}
\newtheorem{theorem}{Theorem}
\newtheorem{lemma}{Lemma}
\newtheorem{remark}{Remark}
\newtheorem{assumption}{Assumption}
\newcommand{\eg}{{\it e.g.}}
\newcommand{\ie}{{\it i.e.}}
\begin{document}
%
% paper title
% Titles are generally capitalized except for words such as a, an, and, as,
% at, but, by, for, in, nor, of, on, or, the, to and up, which are usually
% not capitalized unless they are the first or last word of the title.
% Linebreaks \\ can be used within to get better formatting as desired.
% Do not put math or special symbols in the title.
\title{RLC Circuits based Distributed Mirror Descent Method}
%
%
% author names and IEEE memberships
% note positions of commas and nonbreaking spaces ( ~ ) LaTeX will not break
% a structure at a ~ so this keeps an author's name from being broken across
% two lines.
% use \thanks{} to gain access to the first footnote area
% a separate \thanks must be used for each paragraph as LaTeX2e's \thanks
% was not built to handle multiple paragraphs
%

\author{Yue~Yu and Beh\c{c}et~A\c{c}\i kme\c{s}e
        % <-this % stops a space
\thanks{Accepeted to IEEE Control Systems Letters, available at \url{https://ieeexplore.ieee.org/document/8993740}}% <-this % stops a space
}

\maketitle
\thispagestyle{empty} % Removes the page number in the first page
% As a general rule, do not put math, special symbols or citations
% in the abstract or keywords.
\begin{abstract}
We consider distributed optimization with smooth convex objective functions defined on an undirected connected graph. Inspired by mirror descent mehod and RLC circuits, we propose a novel distributed mirror descent method. Compared with mirror-prox method, our algorithm achieves the same \(\mathcal{O}(1/k)\) iteration complexity with only half the computation cost per iteration. We further extend our results to cases where a) gradients are corrupted by stochastic noise, and b) objective function is composed of both smooth and non-smooth terms. We demonstrate our theoretical results via numerical experiments.
\end{abstract}

% Note that keywords are not normally used for peerreview papers.
\begin{IEEEkeywords}
Distributed optimization, mirror descent
\end{IEEEkeywords}

\section{Introduction}
\IEEEPARstart{D}{istributed} optimization over a graph aims to optimize sum of nodal convex objective functions via local computation on nodes and efficient communication on edges, which arises in a variety of engineering applications \cite{li2002detection,accikmecse2014decentralized,lesser2012distributed,gholami2016decentralized,yahya2017collective}. Recently there has been an increasing interest in solving distributed optimization problem using distributed mirror descent method 
\cite{duchi2012dual,xi2014distributed,li2016distributed,wang2018distributed,li2018stochastic,doan2019convergence,yuan2018optimal,nemirovski2004prox,juditsky2011solving,he2015mirror,yu2018bregman,yu2018mass,yu2019stochastic}. These methods not only include distributed (projected) subgradient method \cite{nedic2009distributed,nedic2010constrained} as special cases, but can also achieve faster convergence \cite{gholami2016decentralized,yahya2017collective}. 

Although they all revolve around mirror descent method \cite{nemirovsky1983problem,beck2003mirror}, different distributed mirror descent methods use different assumptions on objective functions, consensus dynamics, and discretization schemes. See Tab.~\ref{tab: comparison} for a detailed comparison. For example, methods in \cite{duchi2012dual,xi2014distributed,li2016distributed,wang2018distributed,li2018stochastic} add a heat equation dynamics to mirror descent step. Although using an efficient Euler-forward scheme and suitable for non-smooth objective functions, these methods converge slowly with iteration complexity \(\mathcal{O}(1/\sqrt{k})\), which can be improved to \(\mathcal{O}(1/k)\) by additionally assuming strong convexity \cite{yuan2018optimal}. On the other hand, the mirror-prox method \cite{nemirovski2004prox,juditsky2011solving,he2015mirror} solves distributed optimization as a saddle point problem, leading to a wave equation dynamics, and achieves a \(\mathcal{O}(1/k)\) iteration complexity for smooth objective functions. However, mirror-prox method uses a predictor-corrector scheme, whose per-iteration computation cost is twice as much as Euler-forward scheme. Recently, inspired by distributed alternating directional method of multipliers \cite{boyd2011distributed,wei2012distributed,meng2015proximal}, there are few attempts of adding damping to the wave equation dynamics used by mirror-prox method \cite{yu2018bregman,yu2018mass,yu2019stochastic}. The resulting methods can achieve \(\mathcal{O}(1/k)\) iteration complexity for non-smooth objective functions, but strictly rely on computationally prohibitive Euler-backward scheme that requires optimizing penalized objective function at each iteration. 

Based on the above comparison, this work is motivated by the following question: 

\emph{For smooth distributed optimization, is it possible to upgrade the predictor-corrector scheme used by mirror-prox method to the more efficient Euler-forward scheme?}

In this letter, we answer this question affirmatively and make the following contributions.
\begin{enumerate}
    \item We propose a novel distributed mirror descent method for smooth convex objective function using Euler-forward discretization of RLC circuits dynamics. Compared with mirror-prox method \cite{nemirovski2004prox}, our algorithm achieves the same \(\mathcal{O}(1/k)\) iteration complexity using only half the computation cost per iteration.
    \item We prove that our algorithm also converges in expectation when the gradients are corrupted by stochastic noise with zero mean and bounded variance.
    \item We further extend our results to cases where the objective function is composed of both smooth and non-smooth terms.
\end{enumerate}
We demonstrate our theoretical findings via numerical experiments. Our results extend the previous work \cite{yu2018mass} to noisy gradients and composite objective scenarios. 

The rest of the paper is organized as follows. After introducing necessary preliminaries in \S\ref{sec: preliminaries}, we discuss our algorithm and its convergence in \S\ref{sec: method} and, respectively, \S\ref{sec: convergence}, which are further extended to composite objective optimization in \S\ref{sec: composite}. We demonstrate our results via numerical experiments in \S\ref{sec: experiments} before concluding in \S\ref{sec: conclusion}.  

\begin{table*}[t]
\caption{Comparison of different distributed mirror descent methods.}
  \centering
  \begin{tabular}{|c|c|c|c|c|}
  \hline
  Algorithm & Objective function & Dynamics & Discretization & Convergence \\ 
  \hhline{=====}
  \cite{duchi2012dual,xi2014distributed,li2016distributed,wang2018distributed,li2018stochastic,doan2019convergence} & non-smooth, convex & heat eqn. & Euler-forward  &
  \(\mathcal{O}(1/\sqrt{k})\)\\
  \cite{yuan2018optimal} & non-smooth, strongly convex & heat eqn. & Euler-forward  & \(\mathcal{O}(1/k)\)\\
\cite{nemirovski2004prox,juditsky2011solving,he2015mirror}  & smooth, convex & wave eqn. & predictor-corrector  & \(\mathcal{O}(1/k)\)\\
\cite{yu2018bregman,yu2018mass,yu2019stochastic}  & non-smooth, convex & damped wave eqn. & Euler-backward  & \(\mathcal{O}(1/k)\)\\
  This work & smooth, convex & damped wave eqn. & Euler-forward  & \(\mathcal{O}(1/k)\)\\
  \hline
  \end{tabular}
  
  \label{tab: comparison}
\end{table*}

\section{Preliminaries}\label{sec: preliminaries}
Let \(\mathbb{R}\) denote the real numbers, \(\mathbb{R}^n\) the \(n\)-dimensional real numbers. Let \(\cdot^\top\) denote the matrix (and vector) transpose, \(\langle x, y\rangle= x^\top y\) and \(\norm{x}_2=\sqrt{\langle x, x\rangle}\) the inner product and, respectively, \(\ell_2\) norm. Let \(\diag(x)\in\mathbb{R}^{n\times n}\) denote the diagonal matrix with diagonal elements \(x\in\mathbb{R}^n\), \(I_n\) the \(n\times n\) identity matrix, \(\mathbf{1}_n\in\mathbb{R}^n\) the vector of all \(1\)'s, \(\otimes\) the Kronecker product, and \(\mathds{E}[\cdot]\) the expectation.

\subsection{Graph theory}
An undirected graph \(\mathcal{G}=(\mathcal{V}, \mathcal{E})\) consists of a node set \(\mathcal{V}\) and an edge set \(\mathcal{E}\), where an edge is a pair of distinct nodes in \(\mathcal{V}\). For an arbitrary orientation on \(\mathcal{G}\), \ie , each edge has a head and a tail, the \(|\mathcal{V}|\times |\mathcal{E}|\) incidence matrix is denoted by \(E(\mathcal{G})\). Columns of \(E(\mathcal{G})\) are indexed by the edges in \(\mathcal{E}\), and the entry on their \(i\)-th row is ``\(1\)" if node \(i\) is the head of the edge, ``\(-1\)" if it is its tail, and \(0\) otherwise.
If graph \(\mathcal{G}\) is connected, the nullspace of \(E(\mathcal{G})^\top\) is spanned by \(\mathbf{1}_{|\mathcal{V}|}\) \cite{mesbahi2010graph}.

\subsection{Convex Analysis}
We will use the following results from convex analysis (see, \eg, \cite{rockafellar2015convex}).
Given a convex set \(X\), define the normal cone \(N_X(x)\) at \(x\in X\) as follows
\begin{equation}\label{def: normal cone}
  N_{X}(x)=\{u|\, \langle u, x'-x\rangle\leq 0, \forall x'\in X\}.
\end{equation}
We say \(X\) is the domain of function \(f\), \ie, \(f:X\to\mathbb{R}\), if \(X=\{x|f(x)< \infty\}\). A function \(f:X\to\mathbb{R}\) is convex over its convex domain \(X\) if and only if \(f(\alpha x+(1-\alpha)x')\leq \alpha f(x)+(1-\alpha)f(x')\)
for all \(x, x'\in X\) and \(\alpha\in[0, 1]\). The subdifferential of function \(f\) at \(x\in X\) is defined as
\begin{equation}\label{def: subdifferential}
    \partial f(x)=\{u|f(x')-f(x)\geq \langle u, x'-x\rangle, \forall x'\in X\}.
\end{equation}
If function \(f\) is continuously differentiable over \(X\), then its subdifferential reduces to a singleton, \ie, \(\partial f(x)=\{\nabla f(x)\}\), and \(f\) is convex if and only if, for all \( x, x'\in X,\)
\begin{equation}
    f(x')\geq f(x)+\langle \nabla f(x), x'-x\rangle.
    \label{eqn: convexity}
\end{equation}
In addition, we say \(f\) is \(\beta\)-smooth if \(\frac{\beta}{2}\norm{\cdot}_2^2-f\) is also convex, which implies that, for all \( x, x'\in X,\)
\begin{equation}
     \textstyle f(x')\leq f(x)+\langle\nabla f(x), x'-x\rangle+\frac{\beta}{2}\norm{x'-x}_2^2, 
    \label{eqn: smoothness}
\end{equation}
The Bregman divergence associated with differentiable convex function \(\psi: X\to \mathbb{R}\) is defined as \cite{censor1981iterative}
\begin{equation}
    B_{\psi}(x', x)=\psi(x')-\psi(x)-\langle\nabla \psi(x), x'-x\rangle.\label{eqn: Bregman def}
\end{equation}
Applying \eqref{eqn: Bregman def} to three points \( x, x', x^+\in X\), we can show
\begin{equation}
     \label{eqn: three point}
     \begin{aligned}
     &B_\psi(x', x)-B_\psi(x', x^+)-B_\psi(x^+, x)\\
     =&\langle \nabla\psi(x^+)-\nabla\psi(x), x'-x^+\rangle.
     \end{aligned}
 \end{equation}

We will also use the following results.

\begin{lemma}[Thm. 27.4, \cite{rockafellar2015convex}]\label{lem: optimality} 
If \(f\) is a proper convex function, \(X\) is a closed convex set, then \(x'=\argmin_{x\in X} f(x)\) if and only if \(0\in \partial f(x')+N_X(x')\), \ie, there exists \(v\in\partial f(x')\) such that \( \langle v, x-x'\rangle\geq 0\) for all \(x\in X\).
\end{lemma}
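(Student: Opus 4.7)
The plan is to reduce the constrained minimization to an unconstrained one by augmenting \(f\) with the indicator function of \(X\), namely \(\delta_X(x)=0\) for \(x\in X\) and \(\delta_X(x)=+\infty\) otherwise. Setting \(g=f+\delta_X\), I observe that \(x'=\argmin_{x\in X} f(x)\) if and only if \(x'\) globally minimizes \(g\) on its domain. For a convex function, global minimality at \(x'\in\mathrm{dom}(g)\) is equivalent to \(0\in\partial g(x')\) directly from the definition \eqref{def: subdifferential} (plug \(u=0\) into the inequality and read off \(g(x)\ge g(x')\) for all \(x\)). Thus the task reduces to showing \(\partial g(x')=\partial f(x')+N_X(x')\).

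Next I would invoke the Moreau--Rockafellar sum rule. Under a standard constraint qualification, for instance \(\mathrm{ri}(\mathrm{dom}(f))\cap\mathrm{ri}(X)\neq\emptyset\) (which is automatic when \(\mathrm{dom}(f)=\mathbb{R}^n\), as in the smooth setting relevant to this paper), one has \(\partial(f+\delta_X)(x')=\partial f(x')+\partial\delta_X(x')\). It remains to identify \(\partial\delta_X(x')=N_X(x')\): by \eqref{def: subdifferential}, \(u\in\partial\delta_X(x')\) iff \(\delta_X(x)\ge\langle u,x-x'\rangle\) for every \(x\), which is vacuous when \(x\notin X\) and amounts to \(\langle u,x-x'\rangle\le 0\) for all \(x\in X\), matching \eqref{def: normal cone}. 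Combining, the condition \(0\in\partial f(x')+N_X(x')\) is equivalent to the existence of \(v\in\partial f(x')\) with \(-v\in N_X(x')\), and the sign flip then yields \(\langle v,x-x'\rangle\ge 0\) for all \(x\in X\).

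The converse direction is nearly immediate: if such a \(v\) exists, the subgradient inequality \eqref{def: subdifferential} gives \(f(x)-f(x')\ge\langle v,x-x'\rangle\ge 0\) for every \(x\in X\), so \(x'\) minimizes \(f\) over \(X\). The only delicate step in the whole argument is the sum rule, and this is the part I expect to be the main obstacle; without a constraint qualification the inclusion \(\partial f(x')+\partial\delta_X(x')\subseteq\partial(f+\delta_X)(x')\) can be strict. This is precisely where Rockafellar's original proof of Theorem~27.4 does extra work via support-function duality and the geometry of epigraphs; for the applications in the present paper the qualification is trivially met since the smooth objectives considered have full domain.
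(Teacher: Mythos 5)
The paper offers no proof of this lemma---it is quoted verbatim as Theorem~27.4 of Rockafellar---so there is nothing internal to compare your argument against; what you give is the standard textbook derivation, and it is essentially correct. Reducing to the unconstrained problem via \(g=f+\delta_X\), reading off \(0\in\partial g(x')\) from the definition \eqref{def: subdifferential}, and identifying \(\partial\delta_X(x')=N_X(x')\) with \eqref{def: normal cone} are all fine, as is the observation that the sufficiency direction (existence of \(v\in\partial f(x')\) with \(\langle v,x-x'\rangle\geq 0\) on \(X\) implies optimality) needs no sum rule at all. You are also right to flag the one real issue: the necessity direction hinges on the Moreau--Rockafellar identity \(\partial(f+\delta_X)(x')=\partial f(x')+N_X(x')\), which can fail without a constraint qualification such as \(\mathrm{ri}(\mathrm{dom}\, f)\cap\mathrm{ri}(X)\neq\emptyset\); as literally stated, the lemma silently assumes this (Rockafellar's Theorem~27.4 states sufficiency unconditionally and necessity only under such a qualification). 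Your closing remark that the qualification is ``automatic when \(\mathrm{dom}(f)=\mathbb{R}^n\)'' is slightly off target for this paper, since here the objectives \(f_i\) and \(\psi_0\) are defined only on \(X_0\); the qualification nonetheless holds in every invocation (the \(x\)-updates in \eqref{alg: RLC} and \eqref{alg: co-RLC} and the KKT conditions \eqref{kkt: dual}) because the domain of the minimized function coincides with the constraint set \(X\), so \(\mathrm{ri}(\mathrm{dom})\cap\mathrm{ri}(X)=\mathrm{ri}(X)\neq\emptyset\). With that hypothesis made explicit, your proof is complete.
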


\section{Method}
\label{sec: method}
In this section, we introduce our main algorithm inspired by RLC circuits dynamics. We first define the following two matrices that will be used frequently later.
\begin{equation}
\begin{aligned}
    E_l(\mathcal{G})=&(E(\mathcal{G})\diag(\sqrt{l}))\otimes I_n\\
    L_r(\mathcal{G})=& (E(\mathcal{G}\diag(r)E(\mathcal{G})^\top)\otimes I_n
\end{aligned}\label{eqn: edge weights}
\end{equation}
where \(l, r\in\mathbb{R}^{|\mathcal{E}|}\) are element-wise positive vectors, \(\sqrt{l}\) is the element-wise square root of \(l\). With this definition, we introduce the following distributed optimization problem,
\begin{equation}
    \begin{array}{ll}
    \underset{x}{\mbox{minimize}} & f(x)\coloneqq\sum_{i\in\mathcal{V}} f_i(x_i) \\
    & E_l(\mathcal{G})^\top x=0,\enskip x\in X\coloneqq X_0^{|\mathcal{V}|},  
    \end{array}\label{opt: dist opt}
\end{equation}
where \(x=[x_1^\top, \ldots, x_{|\mathcal{V}|}^\top]^\top\); \(X_0^{|\mathcal{V}|}\) is the Cartesian product of \(|\mathcal{V}|\) copies of closed convex set \(X_0\subseteq \mathbb{R}^n\); \(f_i:X_0\to\mathbb{R}\) is a convex differentiable function available to node \(i\) only. The idea of formulation \eqref{opt: dist opt} ensures local variable \(x_i\) agree with each other using constraint \(E_l(\mathcal{G})^\top x=0\).

\begin{figure}[h]
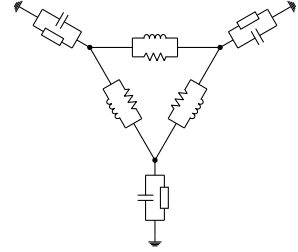

     \centering
     \ctikzset{bipoles/length=0.7cm, bipoles/thickness=1}
\begin{tikzpicture}[scale=0.5]
   
\begin{scope}[rotate=0,transform shape]
    \input{method/RLC/subRLC.tex}
\end{scope}

\begin{scope}[rotate=120,transform shape]
    \input{method/RLC/subRLC.tex}
\end{scope}

\begin{scope}[rotate=-120,transform shape]
    \input{method/RLC/subRLC.tex}
\end{scope}

\end{tikzpicture}
     \caption{An illustration of RLC circuits.}
     \label{fig: RLC}
 \end{figure}
Aiming to solve \eqref{opt: dist opt}, we consider a conceptual RLC circuits defined on graph \(\mathcal{G}=(\mathcal{V}, \mathcal{E})\) as follows.\footnote{An equivalent mechanical network model was used in \cite{yu2018mass}.} Let each node \(i\in\mathcal{V}\) denote a pin with electrical potential \(x_i(t)\) at time \(t\). Suppose a) between each pin \(i\) and ground (zero potential point), we add a nonlinear capacitor in parallel with a nonlinear resistor, mapping voltage \(x_i(t)\) to current \(\frac{d}{dt}\nabla \psi_0(x_i(t))\) for a differentiable function \(\psi_0\) and, respectively, \(\nabla f_i(x_i)\); b) on each edge \(\{ij\}\in\mathcal{E}\), we add a linear inductor in parallel with a linear resistor, mapping voltage \(x_i-x_j\) to current \(l_{\{ij\}}\int_0^t(x_i(\tau)-x_j(\tau))d\tau\) and, respectively, \(r_{\{ij\}}(x_i-x_j)\); see Fig.~\ref{fig: RLC} and Tab.~\ref{tab: RLC} for an illustration. Then Kirchoff current law says the total current flowing through pin \(i\) should sum to zero, which gives the following relation
\begin{table}[h]
\caption{Voltage-current relation of RLC units}
    \label{tab:RLC}
    \centering
    \ctikzset{bipoles/length=0.42cm}
\begin{tabular}{|c|c|c|c|}
    \hline
    \small
    type & symbol & voltage & current \\
    \hhline{====}
   resistor & \esymbol{european resistor} & \(x_i\) & \(\nabla f_i(x_i)\)\\
   capacitor & \esymbol{capacitor} & \(x_i\) & \(\frac{d}{dt}\nabla\psi_0(x_i)\)\\
  resistor & \esymbol{resistor} & \(x_i-x_j\) & \(r_{\{i j\}}(x_i-x_j)\)\\
 
  inductor & \esymbol{american inductor} & \(x_i-x_j\) & \(\int l_{\{i j\}}(x_i-x_j)dt\)\\ 
  
    %\hhline{===}
    \hline
    \end{tabular}
    \label{tab: RLC}
\end{table}  
\begin{equation}
    \begin{aligned}
    \textstyle 0=&\textstyle\frac{d}{dt}\nabla \psi_0(x_i(t))+\textstyle\sum_{j\in \mathcal{N}(i)}r_{\{ij\}}(x_i(t)-x_j(t))\\
    &+\nabla f_i(x_i(t))+\textstyle\sum_{j\in \mathcal{N}(i)}l_{\{ij\}}\int_0^t(x_i(\tau)-x_j(\tau))d\tau,
    \end{aligned}\label{eqn: KCL each pin}
\end{equation}
where \(j\in\mathcal{N}(i)\) if and only if \(\{ij\}\in\mathcal{E}\). Let \(x(t)=[x_1(t)^\top, \ldots, x_{|\mathcal{V}|}(t)^\top]^\top\), \(u(t)=E_l(\mathcal{G})^\top \int_0^tx(\tau)d\tau\), \(\psi(x)=\sum_{i\in\mathcal{V}}\psi_0(x_i)\) and \(f(x)=\sum_{i\in\mathcal{V}} f_i(x_i)\), we can write \eqref{eqn: KCL each pin} for all \(i\in\mathcal{V}\) compactly as follows.  
\begin{equation}\label{eqn: KCL}
\begin{aligned}
\textstyle\frac{d}{dt}\nabla\psi(x(t))=&-L_r(\mathcal{G})x(t)-E_l(\mathcal{G})u(t)-\nabla f(x(t)),\\
 \textstyle\frac{d}{dt}u(t)=&E_l(\mathcal{G})^\top x(t).
\end{aligned}
\end{equation}
If we let \(L_l(\mathcal{G})=E_l(\mathcal{G})E_l(\mathcal{G})^\top\), \(\psi=\frac{1}{2}\norm{\cdot}^2_2\) and \(f=0\), then \eqref{eqn: KCL} simplifies to the following damped wave equation, where the \emph{Laplacian operator} is approximated by discretization via Laplacian matrix \(L_r(\mathcal{G})\) and \(L_l(\mathcal{G})\).  
\begin{equation}
    \textstyle \frac{d^2}{dt^2}x(t)=-L_r(\mathcal{G})\frac{d}{dt}x(t)-L_l(\mathcal{G})x(t)
\end{equation}
Applying Euler-forward scheme to \eqref{eqn: KCL} gives
\begin{subequations}\label{eqn: EF}
\begin{align}
    \nabla \psi(x^{k+1})=&\nabla\psi(x^k)-\alpha^k w^k, \label{eqn: EF x}\\
    u^{k+1}=&u^k+\alpha^kE_l(\mathcal{G})^\top x^{k+1}\label{eqn: EF u}
    \end{align}
\end{subequations}
where \(\alpha^k>0\) is the step size and \(w^k\) is defined as
\begin{equation}
    w^k=L_r(\mathcal{G})x^k+E_l(\mathcal{G})u^k+\nabla f(x^k).\label{eqn: w gradient}
\end{equation}
Since \(\psi:X\to\mathbb{R}\) is closed, convex and proper, we know
\(x^{k+1}=\argmin_{x\in X}\,-\langle x, \nabla \psi(x^{k+1})\rangle+\psi(x)\) \cite[Thm. 23.5]{rockafellar2015convex}. Hence  update \eqref{eqn: EF x} is equivalent to the following
\begin{equation*}
    \begin{aligned}
x^{k+1}
=&\underset{x\in X}{\argmin}\,\,\alpha^k\langle w^k, x\rangle-\langle \nabla\psi(x^k), x\rangle+\psi(x)\\
=&\underset{x\in X}{\argmin}\,\,\alpha^k\langle w^k, x\rangle+B_\psi(x, x^k).
    \end{aligned}
\end{equation*}
where \(w^k\) is given by \eqref{eqn: w gradient}. Therefore, if we further assume that \(\nabla f(x^k)\) in \(w^k\) is corrupted by an additive noise vector \(\eta^k\in\mathbb{R}^{|\mathcal{V}|n}\), then \eqref{eqn: EF} gives the following algorithm,
\begin{equation}
    \begin{aligned}
        w^k=&L_{r}(\mathcal{G})x^k+E_{l}(\mathcal{G})u^k+\nabla f(x^k)+\eta^k,\\
        x^{k+1}=&\underset{x\in X}{\argmin}\,\, \alpha^k\langle w^k, x\rangle+B_\psi(x, x^k),\\
        u^{k+1}=&u^k+\alpha^k E_{l}(\mathcal{G})^\top x^{k+1}.
    \end{aligned}\label{alg: RLC}\tag{RLC}
\end{equation}
The \(x\)-update in \eqref{alg: RLC} minimizes a linearized version of the cost function while penalizing the distance from \(x^k\) where such linearization is valid, which is a popular scheme under smoothness assumption \cite{nemirovski2004prox,juditsky2011solving,he2015mirror}. Algorithm also \eqref{alg: RLC} allows distributed implementation since equation \eqref{eqn: KCL} is fully separable with respect to different nodes in \(\mathcal{G}\) and no coupling is introduced during discretization.

\begin{remark}
Alternatively, applying the predictor-corrector scheme to the undamped version of \eqref{eqn: KCL}, \ie, \(r=0\), gives the following mirror-prox method \cite{nemirovski2004prox}
\begin{equation}
    \begin{aligned}
        y^{k}=&\underset{y\in X}{\argmin}\,\, \alpha^k\langle E_{l}(\mathcal{G})u^k+\nabla f(x^k), y\rangle+B_\psi(y, x^k),\\
        v^{k}=&u^k+\alpha^k E_{l}(\mathcal{G})^\top x^k,\\
        x^{k+1}=&\underset{x\in X}{\argmin}\,\, \alpha^k\langle E_{l}(\mathcal{G})v^k+\nabla f(y^k), x\rangle+B_\psi(x, x^k),\\
        u^{k+1}=&u^k+\alpha^k E_{l}(\mathcal{G})^\top y^{k}.
    \end{aligned}\label{alg: mirror-prox}\tag{mirror-prox}
\end{equation}
Compared with \eqref{alg: mirror-prox}, iterations in \eqref{alg: RLC} compute \(\nabla f(x)\) (gradient evaluating) and \(E_l(\mathcal{G})^\top x\) (information exchange among neighboring nodes) only half as often by adding an \(L_r(\mathcal{G})x\) term at virtually no additional cost.
\end{remark}

\section{Convergence}
\label{sec: convergence}
In this section, we establish the convergence of \eqref{alg: RLC}. We first make the following technical assumptions.
\begin{assumption}
\begin{enumerate}
    \item \(\mathcal{G}=(\mathcal{V}, \mathcal{E})\) is undirected and connected. Edge weights \(l, r\in\mathbb{R}^{|\mathcal{E}|}\) used in \eqref{eqn: edge weights} are element-wise positive. Let \(\frac{1}{\gamma}=\max\{\alpha|\, \alpha l \leq r\}\) and \(\lambda\) be the largest eigenvalue of matrix \(L_r(\mathcal{G})\).
    \item \(X_0\subseteq\mathbb{R}^n\) is a closed convex set,\footnote{In general, \(X_0\) can be a closed convex subset of any Hilbert spaces. Here we assume this space is \(\mathbb{R}^n\)  for simplicity.} \(\psi_0:X_0\to\mathbb{R}\) is proper, closed, continuously differentiable and \(1\)-strongly convex, \ie, \(\psi_0-\frac{1}{2}\norm{\cdot}_2^2\) is convex over \(X_0\).
    \item  For all \(i\in\mathcal{V}\), \(f_i: X_0\to \mathbb{R}\) is continuously differentiable, convex and \(\beta\)-smooth, \ie,  both \(f_i\) and \(\frac{\beta}{2}\norm{\cdot}_2^2-f_i\) are convex over \(X_0\). There exists \(x^\star, u^\star\) such that
\begin{subequations}
    \begin{align}
    &E_l(\mathcal{G})^\top x^\star=0, \enskip x^\star\in X,\label{kkt: primal}\\
    -&E_l(\mathcal{G})u^\star-\nabla f(x^\star)\in N_{X}(x^\star).\label{kkt: dual}
    \end{align}
\end{subequations}
\end{enumerate}\label{asp: basic}
\end{assumption}
We define the Lagrangian of \eqref{opt: dist opt} as follows
\begin{equation}
    \ell(x, u)=f(x)+\langle E_l(\mathcal{G})u, x\rangle.
    \label{eqn: Lagrangian}
\end{equation}
Using Lemma~\ref{lem: optimality} one can show that \eqref{kkt: primal} and \eqref{kkt: dual} imply
\(u^\star=\argmax_u\,\ell(x, u)\) and \(x^\star=\argmin_{x\in X}\,\ell(x, u)\) for any \(x\) and \(u\). Further, \eqref{kkt: primal} also implies that \(\ell(x^\star, u^\star)=\ell(x^\star, u)\) for any \(u\), hence,  
\begin{equation*}
    \begin{aligned}
    0\leq \ell(x, u^\star)-\ell(x^\star, u^\star)=\underset{u}{\mbox{max}}\,\ell(x, u)-\underset{x\in X}{\mbox{min}}\,\ell(x, u).
    \end{aligned}
\end{equation*} 
for all \(x\in X\) and \(u\in\mathbb{R}^{|\mathcal{V}|n}\).
Non-negative function \(\ell(x, u^\star)-\ell(x^\star, u^\star)\) is also know as the \emph{running duality gap function}, which measures the quality of solution \(x\) \cite{nemirovski2004prox,meng2015proximal}. 

We will use the following energy function, which represents the sum of electrical and magnetic energy of the RLC circuits used in \eqref{eqn: KCL}.
\begin{equation}
    \label{eqn: Lyapunov}
     \textstyle V(x, u)=B_\psi(x^\star, x)+\frac{1}{2}\norm{u-u^\star}_2^2.
\end{equation}

The following lemma shows that, along trajectories generated by \eqref{alg: RLC}, the variation of \(V(x, u)\) is controlled by \(\ell(x, u^\star)-\ell(x^\star, u^\star)\) up to a discretization error. 

\begin{lemma}
\label{lem: dissipation}
Suppose Assumption~\ref{asp: basic} holds. If \(0<\alpha^k\leq \frac{1}{\gamma}\), then along the trajectories generated by \eqref{alg: RLC}, we have
\begin{equation}\label{eqn: dissipation}
    \begin{aligned}
    & V(x^{k+1}, u^{k+1})-V(x^k, u^k)
    \\
    \leq
    & -\alpha^k(\ell(x^{k+1}, u^\star)-\ell(x^\star, u^\star))\\
    & \textstyle-\alpha^k\langle \eta^k, x^{k+1}-x^\star\rangle-\frac{1-\alpha^k(\lambda+\beta)}{2}\norm{x^{k+1}-x^k}_2^2,
    \end{aligned}
\end{equation}
where \(\ell(x, u)\) and \(V(x, u)\) are given by \eqref{eqn: Lagrangian} and \eqref{eqn: Lyapunov}.
\end{lemma}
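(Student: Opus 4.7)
The plan is to split the Lyapunov variation into its Bregman and dual pieces and to handle each by standard one-step descent arguments, then to absorb the coupling terms using the eigenvalue bound $\lambda$ and the step-size condition $\alpha^k \le 1/\gamma$.

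First I would treat the Bregman part. By applying Lemma~\ref{lem: optimality} to the $x$-update
\(x^{k+1}=\argmin_{x\in X}\,\alpha^k\langle w^k,x\rangle+B_\psi(x,x^k)\), I get the variational inequality
\(\langle \alpha^k w^k+\nabla\psi(x^{k+1})-\nabla\psi(x^k),\,x^\star-x^{k+1}\rangle\ge 0\).
Plugging this into the three-point identity \eqref{eqn: three point} (applied at $x^k,x^\star,x^{k+1}$) and invoking the $1$-strong convexity of $\psi$ to bound $B_\psi(x^{k+1},x^k)\ge\tfrac12\|x^{k+1}-x^k\|_2^2$ yields
\[
B_\psi(x^\star,x^{k+1})-B_\psi(x^\star,x^k)\;\le\;-\tfrac12\|x^{k+1}-x^k\|_2^2+\alpha^k\langle w^k,\,x^\star-x^{k+1}\rangle.
\]
For the quadratic part I would expand $\tfrac12\|u^{k+1}-u^\star\|_2^2-\tfrac12\|u^k-u^\star\|_2^2$ using $u^{k+1}=u^k+\alpha^k E_l(\mathcal{G})^\top x^{k+1}$ and the primal feasibility $E_l(\mathcal{G})^\top x^\star=0$, producing
\(\alpha^k\langle E_l(\mathcal{G})(u^k-u^\star),x^{k+1}-x^\star\rangle+\tfrac{(\alpha^k)^2}{2}\langle x^{k+1}-x^\star,L_l(\mathcal{G})(x^{k+1}-x^\star)\rangle\).
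Adding these two inequalities and substituting the definition \eqref{eqn: w gradient} of $w^k$ makes the $E_l(\mathcal{G})u^k$ contributions cancel exactly.

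The remaining terms split naturally into a ``gradient part'' and a ``graph part'' that I would bound separately. For the gradient part $-\alpha^k\langle \nabla f(x^k)+E_l(\mathcal{G})u^\star,\,x^{k+1}-x^\star\rangle$, I would use $\beta$-smoothness and convexity, writing $\langle\nabla f(x^k),x^{k+1}-x^\star\rangle=\langle\nabla f(x^k),x^{k+1}-x^k\rangle+\langle\nabla f(x^k),x^k-x^\star\rangle$, applying \eqref{eqn: smoothness} to the first and \eqref{eqn: convexity} to the second, so that the expression is lower bounded by $f(x^{k+1})-f(x^\star)-\tfrac{\beta}{2}\|x^{k+1}-x^k\|_2^2$. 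Combined with $-\alpha^k\langle E_l(\mathcal{G})u^\star,x^{k+1}-x^\star\rangle=-\alpha^k[\ell(x^{k+1},u^\star)-\ell(x^\star,u^\star)-f(x^{k+1})+f(x^\star)]$, this gives precisely the $-\alpha^k(\ell(x^{k+1},u^\star)-\ell(x^\star,u^\star))+\tfrac{\alpha^k\beta}{2}\|x^{k+1}-x^k\|_2^2$ contribution and leaves the noise term $-\alpha^k\langle\eta^k,x^{k+1}-x^\star\rangle$ untouched.

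The main obstacle is the graph part
\(-\alpha^k\langle L_r(\mathcal{G})x^k,x^{k+1}-x^\star\rangle+\tfrac{(\alpha^k)^2}{2}\langle x^{k+1}-x^\star,L_l(\mathcal{G})(x^{k+1}-x^\star)\rangle\),
which must collapse into a pure $\tfrac{\alpha^k\lambda}{2}\|x^{k+1}-x^k\|_2^2$ contribution without leaving any positive definite residue. Using $L_r(\mathcal{G})x^\star=0$ I would write $x^k=x^{k+1}+(x^k-x^{k+1})$, so the first term becomes $-\alpha^k\langle L_r(\mathcal{G})(x^{k+1}-x^\star),x^{k+1}-x^\star\rangle-\alpha^k\langle L_r(\mathcal{G})(x^k-x^{k+1}),x^{k+1}-x^\star\rangle$. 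Applying Young's inequality with the splitting $L_r(\mathcal{G})=L_r(\mathcal{G})^{1/2}L_r(\mathcal{G})^{1/2}$ bounds the cross term by $\tfrac{\alpha^k}{2}\langle x^k-x^{k+1},L_r(\mathcal{G})(x^k-x^{k+1})\rangle+\tfrac{\alpha^k}{2}\langle x^{k+1}-x^\star,L_r(\mathcal{G})(x^{k+1}-x^\star)\rangle$, and the first piece is further bounded by $\tfrac{\alpha^k\lambda}{2}\|x^{k+1}-x^k\|_2^2$ since $\lambda$ is the top eigenvalue of $L_r(\mathcal{G})$. Finally, the step-size condition $\alpha^k\le 1/\gamma$ combined with $l\le\gamma r$ componentwise gives $\alpha^k L_l(\mathcal{G})\preceq L_r(\mathcal{G})$, hence $\tfrac{(\alpha^k)^2}{2}L_l(\mathcal{G})\preceq\tfrac{\alpha^k}{2}L_r(\mathcal{G})$, which cancels the remaining positive definite $L_r$-quadratic form. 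Collecting everything and grouping the $\|x^{k+1}-x^k\|_2^2$ coefficients $-\tfrac12+\tfrac{\alpha^k\beta}{2}+\tfrac{\alpha^k\lambda}{2}$ produces the stated inequality.
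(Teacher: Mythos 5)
Your proof is correct and follows essentially the same route as the paper's: the optimality condition for the $x$-update combined with the three-point identity \eqref{eqn: three point}, the expansion of $\tfrac12\|u-u^\star\|_2^2$ using $E_l(\mathcal{G})^\top x^\star=0$, cancellation of the $E_l(\mathcal{G})u^k$ terms, convexity and $\beta$-smoothness of $f$, $1$-strong convexity of $\psi$, and the positive semidefiniteness of $L_r(\mathcal{G})-\alpha^k L_l(\mathcal{G})$. The only cosmetic difference is that the paper absorbs the Laplacian quadratic by applying \eqref{eqn: convexity} and \eqref{eqn: smoothness} to the auxiliary $(\beta+\lambda)$-smooth function $h(x)=f(x)+\tfrac{1}{2}\langle L_r(\mathcal{G})x,x\rangle$, whereas you treat $f$ and the graph term separately and dispose of the $L_r(\mathcal{G})$ cross term with an explicit Young's inequality; the resulting algebra is identical.
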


\begin{proof}
The structure of the proof is as follows. We first bound the variation of \(V(x, u)\) between two consecutive iterations by the sum of two inequalities, which can be further simplified under Assumption~\ref{asp: basic}.  

First, using Lemma~\ref{lem: optimality} we can show that the \(x\)-update in \eqref{alg: RLC} implies the following
\begin{equation*}
0\leq\langle \alpha^kw^k+\nabla\psi(x^{k+1})-\nabla\psi(x^k), x^\star-x^{k+1}\rangle.
\end{equation*}
Substituting the \(w\)-update in \eqref{alg: RLC} and \eqref{eqn: three point} (with \(x=x^k, x'=x^\star, x^+=x^{k+1}\)) into the above inequality gives
\begin{equation}\label{eqn: B-1}
    \begin{aligned}
    &B_\psi(x^\star, x^{k+1})-B_\psi(x^\star, x^k)+B_\psi(x^{k+1}, x^k)\\
    \leq 
    &-\alpha^k\langle L_r(\mathcal{G})x^k+E_l(\mathcal{G})u^k+\nabla f(x^k)+\eta^k, x^{k+1}-x^\star\rangle,
    \end{aligned}
\end{equation}
where we also use \eqref{kkt: primal}. 
Second, let \(L_l(\mathcal{G})=E_l(\mathcal{G})E_l(\mathcal{G})^\top\), then using \eqref{eqn: three point} with \(\psi=\frac{1}{2}\norm{\cdot}_2^2\) we can show
\begin{equation}\label{eqn: B-2}
\begin{aligned}
    &\textstyle\frac{1}{2}\norm{u^{k+1}-u^\star}_2^2-\frac{1}{2}\norm{u^k-u^\star}_2^2\\
    =& \textstyle\langle u^k-u^\star, u^{k+1}-u^k\rangle+\frac{1}{2}\norm{u^{k+1}-u^k}_2^2\\
    = &\textstyle\alpha^k\langle E_{l}(\mathcal{G})(u^{k}-u^\star), x^{k+1}-x^\star \rangle\\
    &\textstyle+\frac{\alpha^k}{2}\langle \alpha^k L_{l}(\mathcal{G}) x^{k+1}, x^{k+1}\rangle,
    \end{aligned}
\end{equation}
where the last step uses the \(u\)-update in \eqref{alg: RLC} and \eqref{kkt: primal}.
Further, we can show the following three inequalities, each using one part of Assumption~\ref{asp: basic}.

\emph{Part 1):} Since \(0<\alpha^k\leq \frac{1}{\gamma}=\max\{\alpha|\, \alpha l \leq r\}\), vector \(r-\alpha^k l\) is element-wise non-negative and matrix \(L_r(\mathcal{G})-\alpha^kL_l(\mathcal{G})=(E(\mathcal{G})\diag(r-\alpha^kl)E(\mathcal{G})^\top)\otimes I_n\) is positive semi-definite, which implies
\begin{equation}\label{eqn: B-3}\textstyle0\leq \frac{\alpha^k}{2}\langle (L_r(\mathcal{G})-\alpha^kL_l(\mathcal{G}))x^{k+1}, x^{k+1}\rangle.
\end{equation}

\emph{Part 2):} Since \(\psi_0\) is \(1\)-strongly convex, we can apply \eqref{eqn: convexity} to convex function \(\psi-\frac{1}{2}\norm{\cdot}_2^2\), which gives
\begin{equation}\label{eqn: B-4}
    \textstyle\frac{1}{2}\norm{x^{k+1}-x^k}_2^2\leq B_\psi(x^{k+1}, x^k) .
\end{equation}

\emph{Part 3):}
Since \(f_i\) is convex and \(\beta\)-smooth for all \(i\in\mathcal{V}\) and \(\lambda\) is the largest eigenvalue of \(L_r(\mathcal{G})\), function \(h(x)\coloneqq f(x)+\frac{1}{2}\langle L_r(\mathcal{G})x, x\rangle\) is convex and \((\beta+\lambda)\)-smooth. Hence we can apply inequality \eqref{eqn: convexity} and, respectively, inequality \eqref{eqn: smoothness} to function \(h(x)\), which gives the following.
\begin{equation*}
    \begin{aligned}
    \alpha\langle \nabla h(x), x'-x\rangle\leq& \alpha(h(x')-h(x)),\\ 
    \alpha\langle\nabla h(x), x-x^+\rangle\leq& \alpha \big(h(x)-h(x^+)+\textstyle\frac{\beta+\lambda}{2}\norm{x^+-x}_2^2\big),
    \end{aligned}
\end{equation*}
for all \(x, x^+, x'\in X\) and \(\alpha>0\). Summing up the above two inequalities with \(x=x^k\), \(x'=x^\star\), \(x^+=x^{k+1}\), \(\alpha=\alpha^k\), and recalling that \(h(x)= f(x)+\frac{1}{2}\langle L_r(\mathcal{G})x, x\rangle\), we obtain 
\begin{equation}\label{eqn: B-5}
    \begin{aligned}
    &-\alpha^k\langle \nabla f(x^k)+L_r(\mathcal{G})x^k, x^{k+1}-x^\star \rangle\\
    \leq &\textstyle-\alpha^k(f(x^{k+1})-f(x^\star))-\frac{\alpha^k}{2}\langle L_r(\mathcal{G})x^{k+1}, x^{k+1}\rangle\\
    &\textstyle+\frac{\alpha^k(\beta+\lambda)}{2}\norm{x^{k+1}-x^{k}}_2^2,
    \end{aligned}
\end{equation}
where we also use the fact that \(h(x^\star)=f(x^\star)\).

Finally, summing up \eqref{eqn: B-1}--\eqref{eqn: B-5} gives \eqref{eqn: dissipation}.
\end{proof}
Based on Lemma~\ref{lem: dissipation}, we will prove that \(\ell(x, u^\star)\) converges to \(\ell(x^\star, u^\star)\) in expectation along trajectories generated by \eqref{alg: RLC}. The key idea is upper bounding the last two terms on the right hand side of \eqref{eqn: dissipation}.

We first start with the noiseless case where \(\eta^k=0\).
\begin{theorem}[Noiseless gradient]
Suppose Assumption~\ref{asp: basic} hold. Along the trajectories generated by \eqref{alg: RLC}, if \(0<\alpha^k\equiv \alpha\leq \min\{\frac{1}{\beta+\lambda}, \frac{1}{\gamma}\} \) and \(\eta^k=0\) for all \(k\), then
\begin{equation*}
    \textstyle\ell(\overline{x}^K, u^\star)-\ell(x^\star, u^\star)\leq \frac{V(x^1, u^1)}{\alpha K},
\end{equation*}
where \(\overline{x}^K=\frac{1}{K}\sum_{k=1}^K x^{k+1}\), \(\ell(x, u)\) and \(V(x, u)\) are given by \eqref{eqn: Lagrangian} and, respectively, \eqref{eqn: Lyapunov}.\label{thm: gradient}
\end{theorem}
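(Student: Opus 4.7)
The plan is to chain together the dissipation inequality from Lemma~\ref{lem: dissipation}, telescope it across iterations, and apply Jensen's inequality to convert the bound on the running average of Lagrangian gaps into a bound on the gap at the averaged iterate $\overline{x}^K$.

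First, I would specialize \eqref{eqn: dissipation} to the noiseless setting $\eta^k=0$ with the constant step size $\alpha^k\equiv\alpha$. Since the hypothesis $\alpha\leq\tfrac{1}{\beta+\lambda}$ ensures $1-\alpha(\lambda+\beta)\geq 0$, the quadratic ``discretization error'' term $-\tfrac{1-\alpha(\lambda+\beta)}{2}\|x^{k+1}-x^k\|_2^2$ is non-positive and may be dropped. Likewise, the hypothesis $\alpha\leq\tfrac{1}{\gamma}$ is exactly what is required to invoke Lemma~\ref{lem: dissipation} in the first place. This reduces \eqref{eqn: dissipation} to the clean one-step contraction
\begin{equation*}
V(x^{k+1},u^{k+1})-V(x^k,u^k)\leq -\alpha\bigl(\ell(x^{k+1},u^\star)-\ell(x^\star,u^\star)\bigr).
\end{equation*}

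Next I would sum this inequality for $k=1,\dots,K$. The left-hand side telescopes to $V(x^{K+1},u^{K+1})-V(x^1,u^1)$, and since $B_\psi(x^\star,\cdot)\geq 0$ (by convexity of $\psi$) and $\tfrac{1}{2}\|\cdot-u^\star\|_2^2\geq 0$, we have $V(x^{K+1},u^{K+1})\geq 0$. Rearranging yields
\begin{equation*}
\alpha\sum_{k=1}^K\bigl(\ell(x^{k+1},u^\star)-\ell(x^\star,u^\star)\bigr)\leq V(x^1,u^1).
\end{equation*}

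Finally, I would use Jensen's inequality. The Lagrangian $\ell(x,u^\star)=f(x)+\langle E_l(\mathcal{G})u^\star,x\rangle$ is convex in $x$ (it is the sum of the convex $f$ and a linear term), so
\begin{equation*}
\ell(\overline{x}^K,u^\star)\leq \frac{1}{K}\sum_{k=1}^K\ell(x^{k+1},u^\star).
\end{equation*}
Subtracting $\ell(x^\star,u^\star)$ from both sides and combining with the telescoped bound delivers the claimed rate. There is no real obstacle in this argument; all of the work is already done in Lemma~\ref{lem: dissipation}, and the role of the step-size condition $\alpha\leq\min\{\tfrac{1}{\beta+\lambda},\tfrac{1}{\gamma}\}$ is simply to make the one-step inequality monotone so that the telescoping argument goes through.
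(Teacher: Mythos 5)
Your proposal is correct and follows essentially the same route as the paper's proof: drop the non-positive quadratic term using $\alpha\leq\frac{1}{\beta+\lambda}$, telescope the resulting one-step inequality using $V\geq 0$, and finish with Jensen's inequality applied to the convex function $\ell(\cdot,u^\star)$. The only difference is that you spell out a few details (non-negativity of $V$, convexity of the Lagrangian in $x$) that the paper leaves implicit.
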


\begin{proof}
If \(\eta^k=0\) and \(\alpha^k\equiv \alpha\leq\frac{1}{\beta+\lambda}\), then \eqref{eqn: dissipation} reduces to
\begin{equation*}
    \alpha(\ell(x^{k+1}, u^\star)-\ell(x^\star, u^\star))\leq V(x^k, u^k)-V(x^{k+1}, u^{k+1}).
\end{equation*}
Summing up the above inequality from \(k=1\) to \(k=K\) we have \(\alpha\sum_{k=1}^K(\ell(x^{k+1}, u^\star)-\ell(x^\star, u^\star))\leq V(x^1, u^1)\), which, combined with Jensen's inequality  \(\ell(\overline{x}^K, u^\star)\leq\frac{1}{K}\sum_{k=1}^K\ell(x^{k+1}, u^\star)\), completes the proof.
\end{proof}

Theorem~\ref{thm: gradient} shows that the step sizes for  algorithm \eqref{alg: RLC} (noiseless case) are bounded by \(\min\{\frac{1}{\beta+\lambda}, \frac{1}{\gamma}\}\). In comparison, one can show that, under Assumption~\ref{asp: basic}, the step sizes for algorithm \eqref{alg: mirror-prox} are bounded by \(\min\{\frac{1}{2\beta}, \frac{1}{2\sqrt{\gamma\lambda}}\}\)\cite[Thm. 5.2]{bubeck2015convex}.

The following theorem extends Theorem~\ref{thm: gradient} to cases where \(\eta^k\) has zero mean and bounded variance.

\begin{theorem}[Noisy gradient]\label{thm: stochastic gradient}
Suppose Assumption~\ref{asp: basic} hold. Along the trajectories generated by \eqref{alg: RLC}, if \(0<\alpha^k\leq \min\{\frac{1}{2(\beta+\lambda)}, \frac{1}{\gamma}\}\), \(\eta^k\) is an independent random variable with \(\mathds{E}[\eta^k]= 0\), \(\mathds{E}\big[\norm{\eta^k}_2^2\big]\leq \sigma^2\) for all \(k\), then
\begin{equation*}
    \textstyle\mathds{E}[\ell(\overline{x}^K, u^\star)]-\ell(x^\star, u^\star)\leq \frac{V(x^1, u^1)+\sigma^2\sum_{k=1}^K (\alpha^k)^2}{\sum_{k=1}^K\alpha^k},
\end{equation*}
where \(\overline{x}^K=(\sum_{k=1}^K\alpha^k)^{-1}\sum_{k=1}^K \alpha^k x^{k+1}\), \(\ell(x, u)\) and \(V(x, u)\) are given by \eqref{eqn: Lagrangian} and, respectively, \eqref{eqn: Lyapunov}.
\end{theorem}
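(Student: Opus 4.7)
The plan is to start from the dissipation inequality of Lemma~\ref{lem: dissipation} and handle the noise cross term $-\alpha^k\langle \eta^k, x^{k+1}-x^\star\rangle$ carefully: it cannot be integrated out directly because $x^{k+1}$ depends on $\eta^k$ through the $x$-update in \eqref{alg: RLC}. The trick is to split
\[
x^{k+1}-x^\star = (x^{k+1}-x^k)+(x^k-x^\star),
\]
so that the piece involving $x^k-x^\star$ has zero conditional mean (since $x^k$ is measurable with respect to $\eta^1,\dots,\eta^{k-1}$, which are independent of $\eta^k$), while the piece involving $x^{k+1}-x^k$ is absorbed into the quadratic $\tfrac{1-\alpha^k(\lambda+\beta)}{2}\|x^{k+1}-x^k\|_2^2$ via Young's inequality.

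Concretely, I would apply Young's inequality in the form $-\alpha^k\langle \eta^k,x^{k+1}-x^k\rangle \leq (\alpha^k)^2\|\eta^k\|_2^2 + \tfrac{1}{4}\|x^{k+1}-x^k\|_2^2$. The choice $\alpha^k\leq \tfrac{1}{2(\beta+\lambda)}$ guarantees $\tfrac{1-\alpha^k(\lambda+\beta)}{2}\geq \tfrac{1}{4}$, so the residual quadratic term has a nonpositive coefficient and may be dropped. Substituting into \eqref{eqn: dissipation} yields
\[
V(x^{k+1},u^{k+1})-V(x^k,u^k) \leq -\alpha^k(\ell(x^{k+1},u^\star)-\ell(x^\star,u^\star)) -\alpha^k\langle \eta^k, x^k-x^\star\rangle + (\alpha^k)^2\|\eta^k\|_2^2.
\]

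Next, I would take total expectation. Using independence of $\eta^k$ from the sigma-algebra generated by $\eta^1,\dots,\eta^{k-1}$ (hence from $x^k$ and $u^k$) and the hypothesis $\mathds{E}[\eta^k]=0$, the tower property gives $\mathds{E}[\langle \eta^k, x^k-x^\star\rangle]=0$; the bound $\mathds{E}[\|\eta^k\|_2^2]\leq \sigma^2$ controls the last term. Summing from $k=1$ to $k=K$ and telescoping, together with $\mathds{E}[V(x^{K+1},u^{K+1})]\geq 0$, produces
\[
\sum_{k=1}^K \alpha^k(\mathds{E}[\ell(x^{k+1},u^\star)]-\ell(x^\star,u^\star)) \leq V(x^1,u^1) + \sigma^2\sum_{k=1}^K(\alpha^k)^2.
\]

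Finally, I would invoke convexity of $\ell(\cdot, u^\star)$ (which is immediate, since $\ell(x,u^\star)=f(x)+\langle E_l(\mathcal{G})u^\star,x\rangle$ is a sum of a convex function and a linear one) and apply Jensen's inequality with the weighted average $\overline{x}^K=(\sum_{k=1}^K\alpha^k)^{-1}\sum_{k=1}^K \alpha^k x^{k+1}$, then divide by $\sum_{k=1}^K\alpha^k$ to obtain the claimed bound. The main obstacle is the handling of the noise term, and in particular noticing that the stepsize must be tightened from $\min\{1/(\beta+\lambda),1/\gamma\}$ (noiseless case) to $\min\{1/(2(\beta+\lambda)),1/\gamma\}$ precisely to leave a margin of $\tfrac{1}{4}\|x^{k+1}-x^k\|_2^2$ available for Young's inequality; all subsequent steps are routine.
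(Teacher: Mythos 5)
Your proposal is correct and follows essentially the same route as the paper's proof: the same split $x^{k+1}-x^\star=(x^{k+1}-x^k)+(x^k-x^\star)$, Young's inequality to absorb the $x^{k+1}-x^k$ piece into the quadratic term (the paper picks $\varepsilon=\frac{\alpha^k}{1-\alpha^k(\beta+\lambda)}$ for exact cancellation and then bounds $\frac{\varepsilon\alpha^k}{2}\leq(\alpha^k)^2$, while you fix the split to get $(\alpha^k)^2\|\eta^k\|_2^2+\frac14\|x^{k+1}-x^k\|_2^2$ and drop the nonpositive remainder --- an immaterial difference), followed by the same expectation, telescoping, and Jensen steps. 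No gaps.
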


\begin{proof}
Observe that
\begin{equation}\label{eqn: thm2 eqn1}
    \begin{aligned}
    &-\alpha^k\langle \eta^k, x^{k+1}-x^\star\rangle\\
    =&-\alpha^k\langle \eta^k, x^{k+1}-x^k\rangle-\alpha^k\langle \eta^k, x^k-x^\star\rangle\\
    \leq &\textstyle\frac{\varepsilon\alpha^k}{2}\norm{\eta^k}_2^2+\frac{\alpha^k}{2\varepsilon}\norm{x^{k+1}-x^k}_2^2-\alpha^k\langle \eta^k, x^k-x^\star\rangle,
    \end{aligned}
\end{equation}
where the last step is by completing the square, and we let \(\varepsilon=\frac{\alpha^k}{1-\alpha^k(\beta+\lambda)}>0\). Since \(\frac{\varepsilon\alpha^k}{2}\leq (\alpha^k)^2\) when \(\alpha^k\leq \frac{1}{2(\beta+\lambda)}\), summing up \eqref{eqn: dissipation} and \eqref{eqn: thm2 eqn1} gives
\begin{equation}\label{eqn: thm2 eqn2}
    \begin{aligned}
    &V(x^{k+1}, u^{k+1})-V(x^{k}, u^{k})-(\alpha^k)^2\norm{\eta^k}_2^2\\
    \leq & -\alpha^k(\ell(x^{k+1}, u^\star)-\ell(x^\star, u^\star))-\alpha^k\langle \eta^k, x^k-x^\star\rangle).
    \end{aligned}
\end{equation}
From \eqref{alg: RLC} we know that, as random variables, \(x^k\) and \(\eta^k\) are independent of each other, hence
\[\mathds{E}[\langle \eta^k, x^k-x^\star\rangle]=\langle \mathds{E}[\eta^k], \mathds{E}[x^k-x^\star]\rangle=0.\]
Therefore taking expectation on both sides of \eqref{eqn: thm2 eqn2} gives 
\begin{equation*}
    \begin{aligned}
    &\mathds{E}[\alpha^k(\ell(x^{k+1}, u^\star)]-\alpha^k\ell(x^\star, u^\star)\\
    \leq &\mathds{E}[V(x^{k}, u^{k})]-\mathds{E}[V(x^{k+1}, u^{k+1})]+(\alpha^k)^2\sigma^2,
    \end{aligned}
\end{equation*}
where we also use the assumption that \(\mathds{E}\big[\norm{\eta^k}_2^2\big]\leq \sigma^2\). Summing up the above inequality from \(k=1\) to \(k=K\), we have \(\sum_{k=1}^K\mathds{E}[\alpha^k(\ell(x^{k+1}, u^\star)]-\alpha^k\ell(x^\star, u^\star)\leq V(x^1, u^1)+\sum_{k=1}^K(\alpha^k)^2\sigma^2\), which, combined with Jensen's inequality \(\ell(\overline{x}^K, u^\star)\leq\frac{1}{\sum_{k=1}^K\alpha^k}\sum_{k=1}^K\alpha^k\ell(x^{k+1}, u^\star)\), completes the proof.
\end{proof}

\begin{remark}\label{rem: noise floor}
Theorem~\ref{thm: stochastic gradient} shows that if \(\alpha^k\equiv \alpha\), algorithm~\ref{alg: RLC} converges to a ``noise floor" given by \(\alpha\sigma^2\), \ie, \(\lim_{K\to\infty}\mathds{E}[\ell(\overline{x}^K, u^\star)]-\ell(x^\star, u^\star)=\alpha\sigma^2\). If the maximum iteration number \(K\) is fixed in advance as a computation budget and we choose \(\alpha^k\equiv 1/\sqrt{K}\), then such noise floor also decreases with \(K\) at the rate of \(\mathcal{O}(1/\sqrt{K})\).
\end{remark}

\section{Distributed Composite Objective Optimization}
\label{sec: composite}
An important paradigm in machine learning is composite objective optimization where the objective function include, in addition to a smooth cost function, a potentially non-smooth regularization function that typically promotes desired solution properties (\eg , sparsity). Examples of composite optimization include ridge regression, lasso, and logistic regression \cite{duchi2010composite}. Combining such paradigm with problem \eqref{opt: dist opt} gives the following distributed composite objective optimization problem
\begin{equation}\label{opt: co-opt}
    \begin{array}{ll}
    \underset{x}{\mbox{minimize}} & f(x)+g(x)\coloneqq\sum_{i\in\mathcal{V}} f_i(x_i)+ \sum_{i\in\mathcal{V}} g_i(x_i)\\
    & E_l(\mathcal{G})^\top x=0,\enskip x\in X\coloneqq X_0^{|\mathcal{V}|}.  
    \end{array}
\end{equation}
where, in addition to the smooth cost function \(f_i\), each node \(i\in\mathcal{V}\) also has a possibly non-smooth convex regularization function \(g_i:X_0\to \mathbb{R}\). 

Motivated by composite objective mirror descent method \cite{duchi2010composite}, we propose the following variation of \eqref{alg: RLC}, which adds an additional regularization to the \(x\)-update, 
\begin{equation}
    \begin{aligned}
        w^k=&L_{r}(\mathcal{G})x^k+E_{l}(\mathcal{G})u^k+\nabla f(x^k)+\eta^k,\\
        x^{k+1}=&\underset{x\in X}{\argmin}\,\, \alpha^k\langle w^k, x\rangle+\alpha^kg(x)+B_\psi(x, x^k),\\
        u^{k+1}=&u^k+\alpha^k E_{l}(\mathcal{G})^\top x^{k+1}.
    \end{aligned}\label{alg: co-RLC}\tag{co-RLC}
\end{equation}

Algorithm~\eqref{alg: co-RLC} can be viewed as a distributed extension to composite objective mirror descent method \cite{duchi2010composite}; or, alternatively, a direct combination of \eqref{alg: RLC} and the algorithm proposed in \cite{yu2018mass}.

\begin{remark}\label{rem: shrinkage}
An important special case of \eqref{alg: co-RLC} is when \(X_0=\mathbb{R}^n\) and \(g_i=\eta\norm{\cdot}_1\) with \(\theta>0\) for all \(i\in\mathcal{V}\), which aims to induce sparse solution using \(\ell_1\) norm regularization. In this case, one can show that the \(x\)-update in \eqref{alg: co-RLC} reduces to \(x^{k+1}=S_{\alpha^k\theta}(x^k-\alpha^kw^k)\), where \(S_\alpha(x)\) is the shrinkage operator defined element-wise as follows \cite{duchi2010composite}
\[[S_{\alpha}(x)]_j=\text{sign}([x]_j)\max\{0, |[x]_j|-\alpha\},\]
where \([x]_j\) denote the \(j\)-th element of vector \(x\).
\end{remark}

The following theorem shows that the convergence proof of \eqref{alg: co-RLC} follows the same ideas used by Theorem~\ref{thm: gradient} and Theorem~\ref{thm: stochastic gradient}.
\begin{theorem}[Composite objective]\label{thm: composite}
Suppose \(g_i:X_0\to\mathbb{R}^n\) is closed, convex, proper for all \(i\in\mathcal{V}\). Suppose Assumption~\ref{asp: basic} hold with \eqref{kkt: dual} replaced by 
\[-E_l(\mathcal{G})u^\star-\nabla f(x^\star)\in N_X(x^\star)+\partial g(x^\star).\]
Then Theorem~\ref{thm: gradient} and Theorem~\ref{thm: stochastic gradient} hold if \eqref{alg: RLC} is replaced by \eqref{alg: co-RLC} and \(\ell(x, u^\star)\) is replaced by \(\ell(x, u^\star)+g(x)\).
\end{theorem}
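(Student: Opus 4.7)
The plan is to retrace the proof of Lemma~\ref{lem: dissipation} with co-RLC in place of RLC, absorb the effect of the extra \(\alpha^k g(x)\) term in the \(x\)-update as a new term \(-\alpha^k(g(x^{k+1})-g(x^\star))\) on the right-hand side of \eqref{eqn: dissipation}, and then copy the arguments of Theorem~\ref{thm: gradient} and Theorem~\ref{thm: stochastic gradient} verbatim with the composite running duality gap \(\ell(x, u^\star)+g(x)-\ell(x^\star, u^\star)-g(x^\star)\) in place of \(\ell(x, u^\star)-\ell(x^\star, u^\star)\).

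First I would apply Lemma~\ref{lem: optimality} to the \(x\)-update of \eqref{alg: co-RLC}: since the objective is \(\alpha^k\langle w^k, x\rangle+\alpha^k g(x)+B_\psi(x, x^k)\), the optimality condition states that there exists \(v^{k+1}\in\partial g(x^{k+1})\) such that
\[0\leq\langle \alpha^k w^k+\alpha^k v^{k+1}+\nabla\psi(x^{k+1})-\nabla\psi(x^k), x^\star-x^{k+1}\rangle.\]
By convexity of \(g\) and \eqref{def: subdifferential}, \(\alpha^k\langle v^{k+1}, x^\star-x^{k+1}\rangle\leq \alpha^k(g(x^\star)-g(x^{k+1}))\). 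Substituting the three-point identity \eqref{eqn: three point} as in the derivation of \eqref{eqn: B-1} then yields exactly \eqref{eqn: B-1} with the additional summand \(-\alpha^k(g(x^{k+1})-g(x^\star))\) on the right-hand side. The bounds \eqref{eqn: B-2}--\eqref{eqn: B-5} are unchanged since they do not involve \(g\), so summing them reproduces \eqref{eqn: dissipation} with \(\ell(x^{k+1},u^\star)-\ell(x^\star,u^\star)\) replaced by \(\ell(x^{k+1},u^\star)+g(x^{k+1})-\ell(x^\star,u^\star)-g(x^\star)\).

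Next I would verify that this new gap is non-negative, which is what lets the proofs of Theorem~\ref{thm: gradient} and Theorem~\ref{thm: stochastic gradient} go through unchanged after the substitution. Defining the composite Lagrangian \(\tilde\ell(x,u)=f(x)+g(x)+\langle E_l(\mathcal{G})u, x\rangle\), the modified KKT conditions \eqref{kkt: primal} and \(-E_l(\mathcal{G})u^\star-\nabla f(x^\star)\in N_X(x^\star)+\partial g(x^\star)\), together with Lemma~\ref{lem: optimality}, imply \(x^\star=\argmin_{x\in X}\tilde\ell(x,u^\star)\) and \(u^\star=\argmax_u\tilde\ell(x^\star,u)\); combined with \(\tilde\ell(x^\star,u^\star)=\tilde\ell(x^\star,u)\) from \eqref{kkt: primal}, this gives \(\tilde\ell(x,u^\star)-\tilde\ell(x^\star,u^\star)\geq 0\) for all \(x\in X\), exactly as in the smooth case.

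With these two facts in hand, the rest is mechanical: in the noiseless regime I would sum the composite analogue of \eqref{eqn: dissipation} from \(k=1\) to \(K\) under \(\alpha^k\equiv\alpha\leq\min\{1/(\beta+\lambda),1/\gamma\}\), telescope \(V(x^k,u^k)\), and invoke Jensen's inequality on the now jointly convex map \(x\mapsto \ell(x,u^\star)+g(x)\) to pass to \(\overline{x}^K\); in the noisy regime I would reuse the completing-the-square step \eqref{eqn: thm2 eqn1} and independence of \(\eta^k\) from \(x^k\) exactly as written, since neither argument touches the \(g(x^{k+1})-g(x^\star)\) term that we are carrying along. The main obstacle is the very first step: keeping track of the subgradient \(v^{k+1}\) and converting it cleanly into the function-value difference \(g(x^{k+1})-g(x^\star)\) while respecting the fact that the \(x\)-update minimum is attained (so that \(\partial g(x^{k+1})\) is nonempty on \(X\)); once that passage from subgradient to function values is done, the rest of the proof is a direct transcription of the smooth case.
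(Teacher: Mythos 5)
Your proposal is correct and follows essentially the same route as the paper's own proof: apply Lemma~\ref{lem: optimality} to the \(x\)-update of \eqref{alg: co-RLC} to extract a subgradient \(v\in\partial g(x^{k+1})\), convert it to the function-value difference \(g(x^\star)-g(x^{k+1})\) via \eqref{def: subdifferential}, observe that \eqref{eqn: B-2}--\eqref{eqn: B-5} are untouched, and then transcribe the arguments of Theorems~\ref{thm: gradient} and \ref{thm: stochastic gradient} with the composite gap. Your explicit check that the composite duality gap is non-negative under the modified KKT condition is a detail the paper leaves implicit, but it is a welcome addition rather than a deviation.
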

\begin{proof} 
Using Lemma~\ref{lem: optimality} we can show that the \(x\)-update in \eqref{alg: RLC} implies: there exists \(v\in\partial g(x^{k+1})\) such that
\begin{equation*}
0\leq\langle \alpha^k(w^k+v)+\nabla\psi(x^{k+1})-\nabla\psi(x^k), x^\star-x^{k+1}\rangle,
\end{equation*}
Since \(v\in\partial g(x^{k+1})\), we can use \eqref{def: subdifferential} to show
\begin{equation*}
    \alpha^k\langle v, x^\star-x^{k+1}\rangle\leq \alpha^k(g(x^\star)-g(x^{k+1})).
\end{equation*}
Summing up the above two inequalities, then use
\(w\)-update in \eqref{alg: co-RLC} and \eqref{eqn: three point} we can show the following
\begin{equation}\label{eqn: B-1.1}
    \begin{aligned}
    &B_\psi(x^\star, x^{k+1})-B_\psi(x^\star, x^k)-\alpha^k(g(x^\star)-g(x^{k+1}))\\
    \leq &-B_\psi(x^{k+1}, x^k)-\alpha^k\langle L_r(\mathcal{G})x^k, x^{k+1}\rangle\\
    &-\alpha^k\langle E_l(\mathcal{G})u^k+\nabla f(x^k)+\eta^k, x^{k+1}-x^\star\rangle,
    \end{aligned}
\end{equation}
Notice that \eqref{alg: co-RLC} differs from \eqref{alg: RLC} only in its \(x\)-update, which implies \eqref{eqn: B-2}--\eqref{eqn: B-5} in Lemma~\ref{lem: dissipation} still hold as the assumptions they used are unchanged. Therefore we can sum up \eqref{eqn: B-1.1} and \eqref{eqn: B-2}--\eqref{eqn: B-5}, arriving at
\begin{equation}\label{eqn: thm3 eqn2}
    \begin{aligned}
    & V(x^{k+1}, u^{k+1})-V(x^k, u^k)\\
    \leq & \textstyle-\alpha^k\langle \eta^k, x^{k+1}-x^\star\rangle-\frac{1-\alpha^k(\lambda+\beta)}{2}\norm{x^{k+1}-x^k}_2^2\\
    &\alpha^k(\ell(x^{k+1}, u^\star)+g(x^{k+1})-\ell(x^\star, u^\star)-g(x^\star)).
    \end{aligned}
\end{equation}
Finally, applying the same arguments as in the proof of Theorem~\ref{thm: gradient} and Theorem~\ref{thm: stochastic gradient} to \eqref{eqn: thm3 eqn2} instead of \eqref{eqn: dissipation} completes the proof.
\end{proof}
\section{Numerical Experiments}
\label{sec: experiments}
In this section, we compare our algorithms against the distributed mirror descent method in \cite{li2016distributed,wang2018distributed,doan2019convergence} and mirror-prox method (and its composite objective extensions) \cite{nemirovski2004prox,juditsky2011solving,he2015mirror} via numerical examples. We first generate a random graph \(\mathcal{G}\) with \(|\mathcal{V}|=30\) and each pair of nodes are connected with probability \(0.3\). We let \(f_i(x_i)=\frac{1}{2}\norm{A_ix_i-b_i}_2^2\) for all \(i\in\mathcal{V}\), where entries of \(A_i, b_i\) are randomly generated. Using such choice on \(\mathcal{G}\) and \(f\), we construct the following two examples (where \(n=30\)). 
\begin{itemize}
    \item least squares over simplex: \eqref{opt: dist opt} with \(X_0=\{v\in\mathbb{R}^n| v\geq 0, \langle v, \mathbf{1}_n\rangle=1\}\), \(\psi_0(v)=\langle v, \ln v\rangle\).\footnote{\(\ln v\) denotes the element-wise natural logarithm of \(v\).}
    \item least squares with \(\ell_1\) regularization: \eqref{opt: co-opt} with \(g_i(v)=\frac{1}{100}\norm{v}_1\) for all \(i\in\mathcal{V}\), \(X_0=\mathbb{R}^n\), \(\psi_0(v)=\frac{1}{2}\norm{v}_2^2\).
\end{itemize}
For \eqref{alg: RLC} and \eqref{alg: co-RLC}), we set \(r=\frac{1}{10}\mathbf{1}_{|\mathcal{E}|}\), \(l=(\beta+\lambda)r\) and \(\alpha^k\equiv\frac{1}{\beta+\lambda}\); for distributed mirror descent, we use step size \(\frac{1}{\sqrt{k}}\) and doubly stochastic matrix \(P=I-\frac{1}{1+\Delta}L(\mathcal{G})\), where \(L(\mathcal{G})=E(\mathcal{G})E(\mathcal{G})^\top\) and \(\Delta\) is the largest diagonal element of \(L(\mathcal{G})\); for mirror-prox method, we use step size \(\min\{\frac{1}{2\beta}, \frac{1}{2\sqrt{\gamma\lambda}}\}\) for problem \eqref{opt: dist opt} and \eqref{opt: co-opt}, as we discussed after Theorem~\ref{thm: gradient}. In the noisy gradient case, we sample \(\eta^k\) from Gaussian distribution \(\mathcal{N}(0, \sigma I_{|\mathcal{V}|n})\) for all \(k\).

The convergence of algorithms are shown in Fig.~\ref{fig: entropy} and Fig.~\ref{fig: lasso}. We can see that the convergence of \eqref{alg: RLC} and \eqref{alg: co-RLC} behaves no worse than  mirror-prox method \cite{nemirovski2004prox} and its composite objective extension \cite{he2015mirror} using only half of the computation cost per iteration; they all reach a ``noise floor" when gradient are corrupted by noise. In the \(\ell_1\) regularized least squares case, both mirror-prox and RLC significantly outperform distributed mirror descent method since they use the shrinkage operator, as we discussed in Remark~\ref{rem: shrinkage}, rather than subgradients of \(\ell_1\) norm. 

\begin{figure}[t]
\centering
     \subfloat[\(\sigma=0\)\label{subfig: entropy1}]{%
       \includegraphics[width=0.235\textwidth]{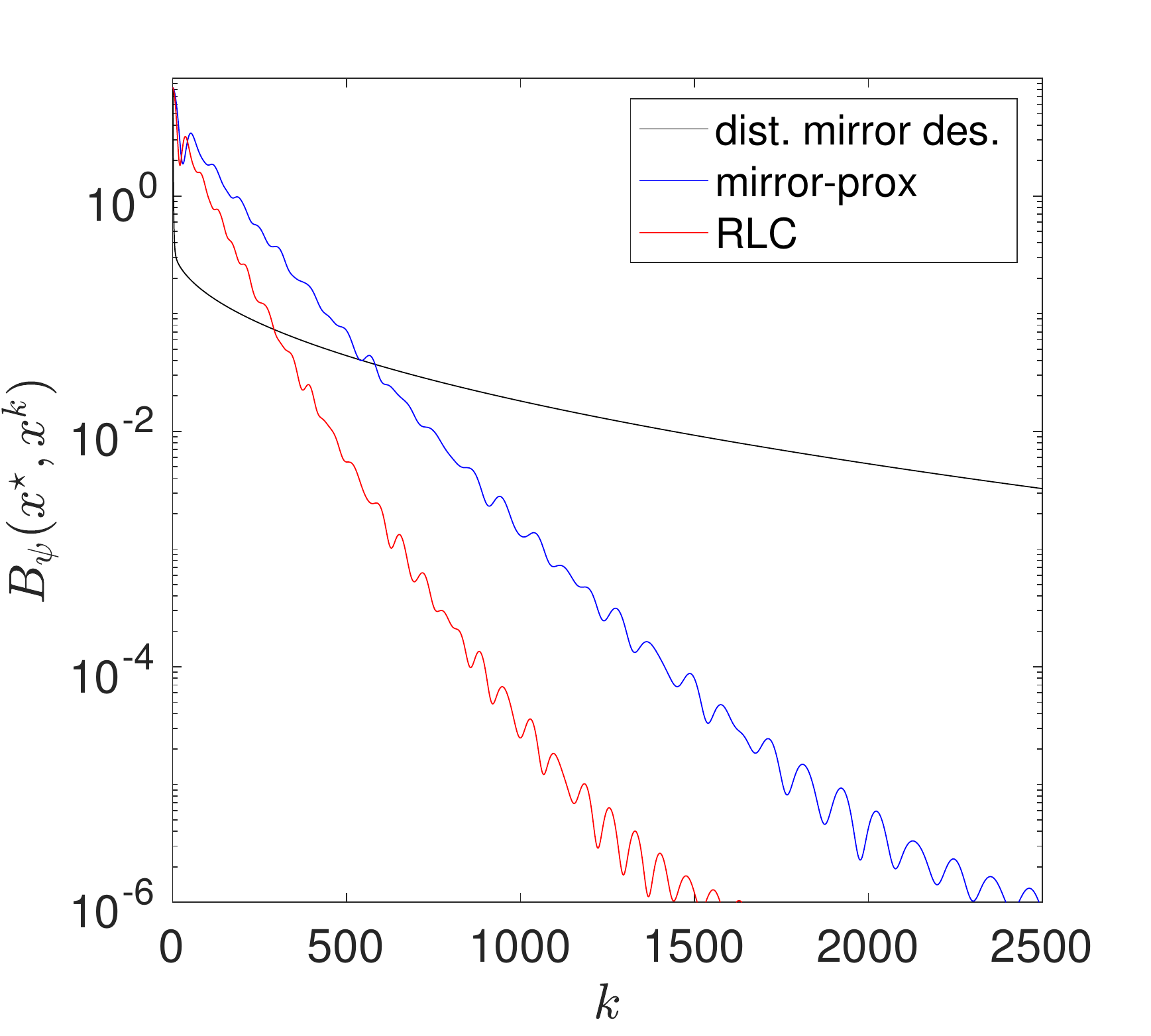}
     }
     \subfloat[\(\sigma=10^{-3}\)\label{subfig: entropy2}]{%
       \includegraphics[width=0.235\textwidth]{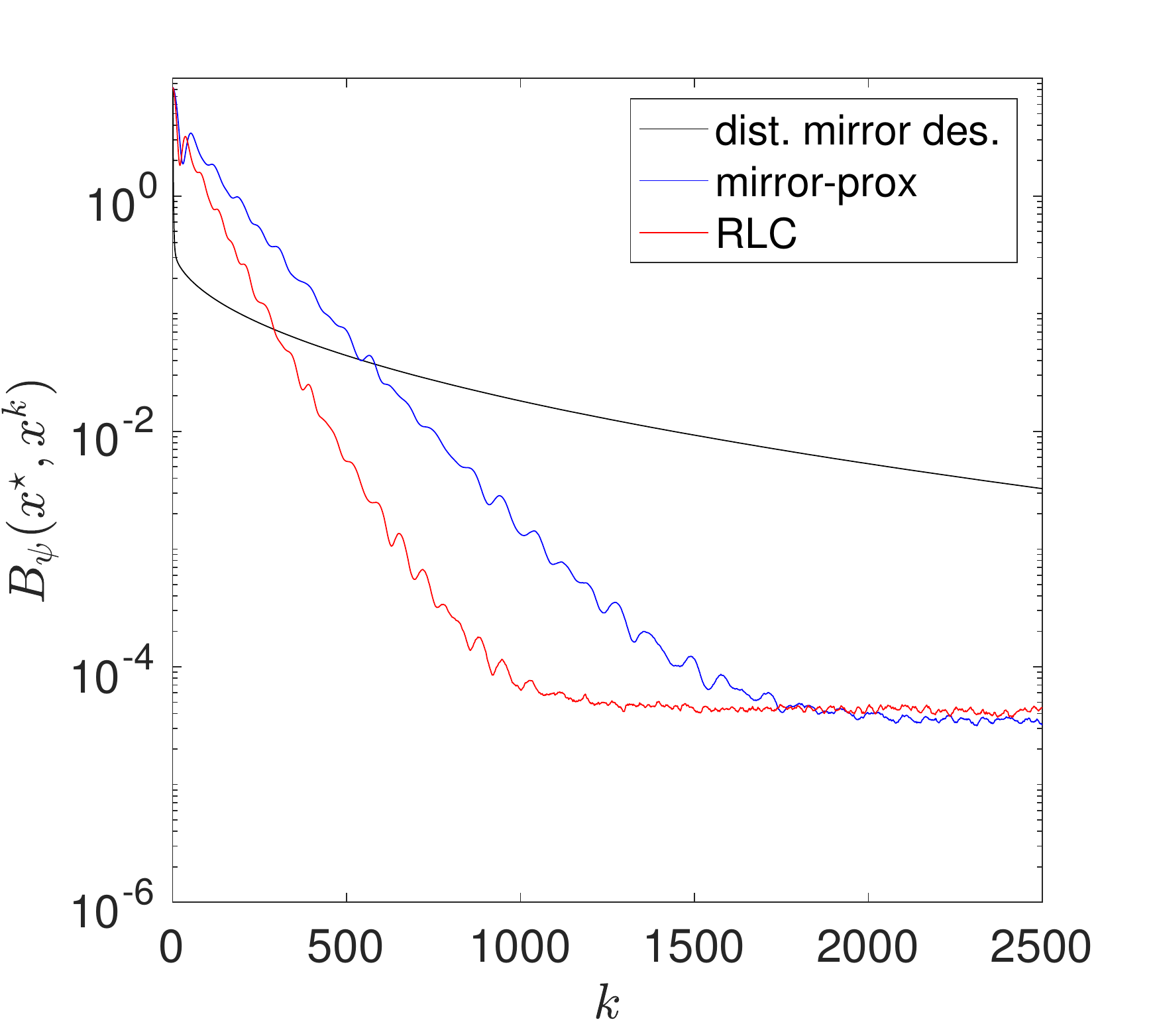}
     }
     \caption{Least squares over simplex.}
     \label{fig: entropy}
\end{figure}

\begin{figure}[t]
\centering
     \subfloat[\(\sigma=0\)\label{subfig: lasso1}]{%
       \includegraphics[width=0.235\textwidth]{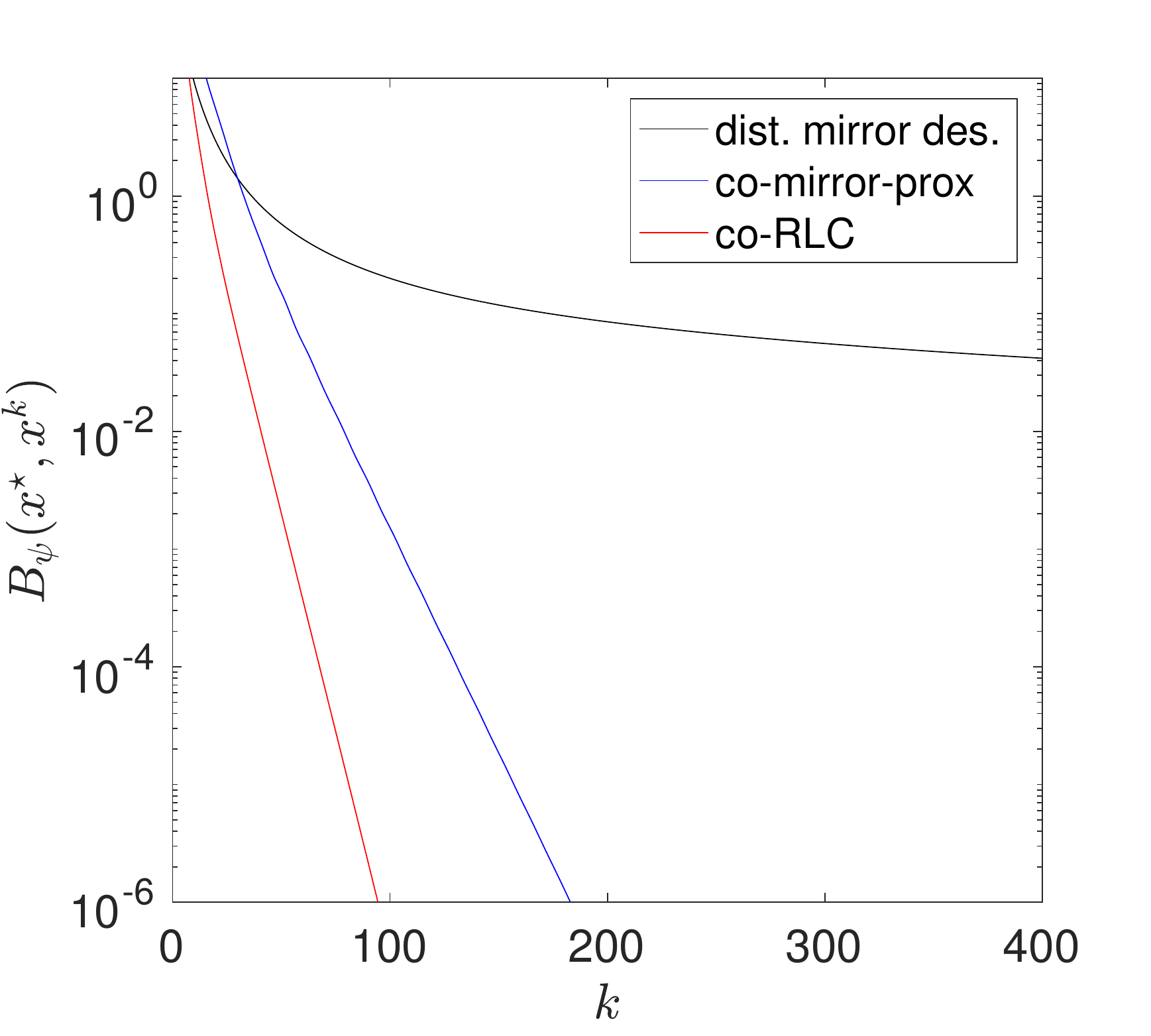}
     }
     \subfloat[\(\sigma=10^{-3}\)\label{subfig: lasso2}]{%
       \includegraphics[width=0.235\textwidth]{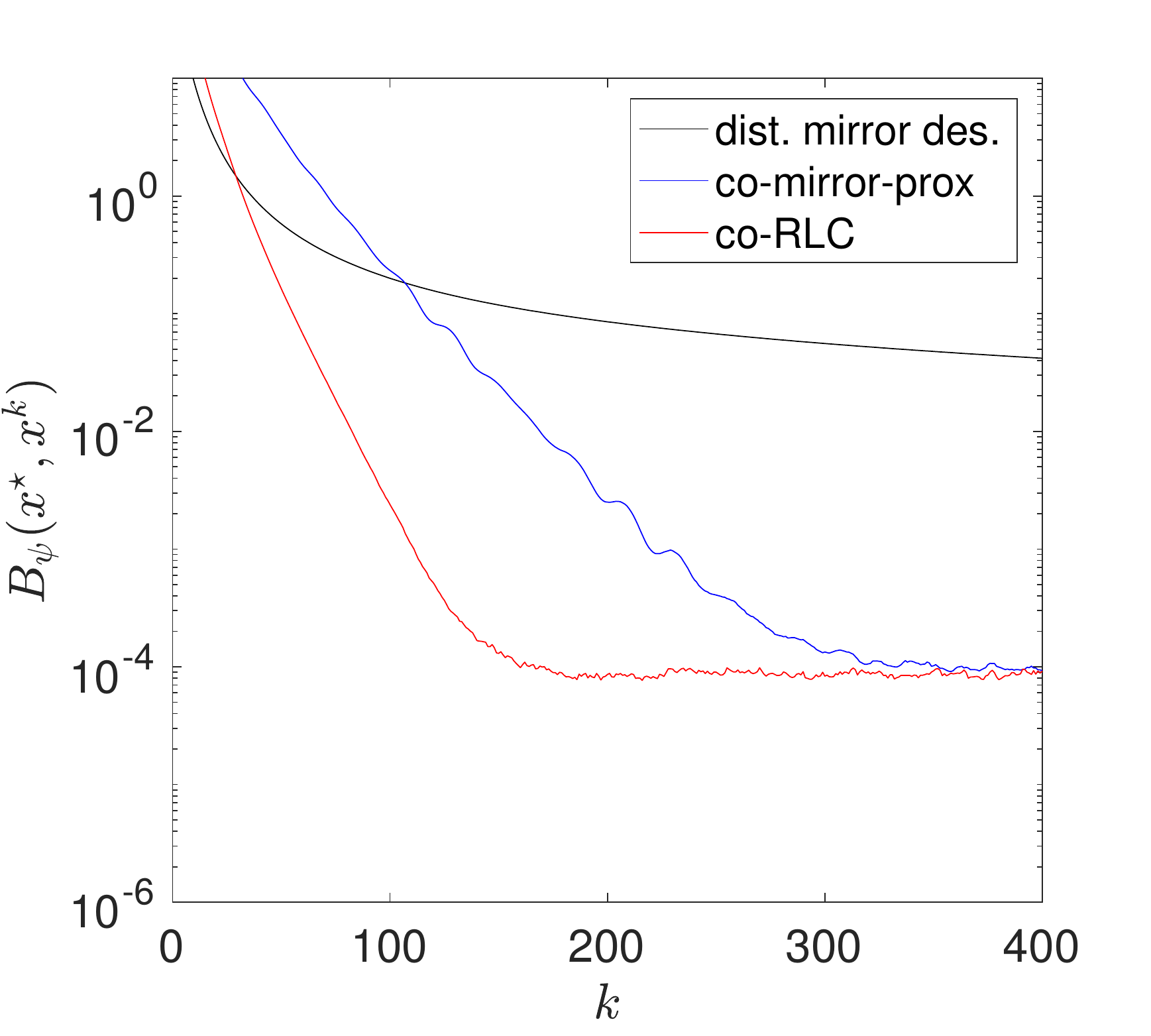}
     }
     \caption{\(\ell_1\) regularized least squares.}
     \label{fig: lasso}
\end{figure}
\section{Conclusion}
\label{sec: conclusion}

Inspired by RLC circuits, we propose a novel distributed mirror descent for smooth distributed optimization, which, compared with mirror-prox method, uses only half the per iteration computation cost. We extend our results to noisy gradients and composite objective setting. However, our results are limited to undirected graphs and it is still an open question of how to extend them to directed graphs. Our future directions include answering such question, as well as developing extensions to online distributed setting and improving convergence results under stronger assumptions.

% For peer review papers, you can put extra information on the cover
% page as needed:
% \ifCLASSOPTIONpeerreview
% \begin{center} \bfseries EDICS Category: 3-BBND \end{center}
% \fi
%
% For peerreview papers, this IEEEtran command inserts a page break and
% creates the second title. It will be ignored for other modes.
%\IEEEpeerreviewmaketitle

% Can use something like this to put references on a page
% by themselves when using endfloat and the captionsoff option.
\ifCLASSOPTIONcaptionsoff
  \newpage
\fi

% trigger a \newpage just before the given reference
% number - used to balance the columns on the last page
% adjust value as needed - may need to be readjusted if
% the document is modified later
%\IEEEtriggeratref{8}
% The "triggered" command can be changed if desired:
%\IEEEtriggercmd{\enlargethispage{-5in}}

% references section

% can use a bibliography generated by BibTeX as a .bbl file
% BibTeX documentation can be easily obtained at:
% http://mirror.ctan.org/biblio/bibtex/contrib/doc/
% The IEEEtran BibTeX style support page is at:
% http://www.michaelshell.org/tex/ieeetran/bibtex/
%\bibliographystyle{IEEEtran}
% argument is your BibTeX string definitions and bibliography database(s)
%\bibliography{IEEEabrv,../bib/paper}
%
% <OR> manually copy in the resultant .bbl file
% set second argument of \begin to the number of references
% (used to reserve space for the reference number labels box)
\bibliographystyle{IEEEtran}
\bibliography{IEEEabrv,reference}

\end{document}